\newcommand{\R}{\mathbb{R}}
\newtheorem{theorem}{Theorem}[section]
\newtheorem{lemma}[theorem]{Lemma}
\newtheorem{proposition}[theorem]{Proposition}
\theoremstyle{definition}
\newtheorem{definition}[theorem]{Definition}
\newtheorem{problem}[theorem]{Problem}
\newtheorem{example}[theorem]{Example}
\theoremstyle{remark}
\newtheorem{remark}[theorem]{Remark}
\numberwithin{equation}{section}
\begin{document}

\title[Controller Synthesis for Robust Invariance of Polynomial Dynamical Systems]
      {Controller Synthesis for Robust Invariance of Polynomial Dynamical Systems using Linear Programming}
\thanks{This work was supported by the Agence Nationale de la Recherche (VEDECY project - ANR 2009 SEGI 015 01).}


\author[Mohamed Amin Ben Sassi]{Mohamed Amin Ben Sassi}
\address{Laboratoire Jean Kuntzmann \\
Universit\'e de Grenoble \\
B.P. 53, 38041 Grenoble, France}
\email{Mohamed-Amin.Bensassi@imag.fr}

\author[Antoine Girard]{Antoine Girard}
\address{Laboratoire Jean Kuntzmann \\
Universit\'e de Grenoble \\
B.P. 53, 38041 Grenoble, France} \email{Antoine.Girard@imag.fr}


\maketitle


\begin{abstract}
In this paper, we consider a control synthesis problem for a class of polynomial dynamical systems
subject to bounded disturbances and with input constraints. 
More precisely, we aim at synthesizing at the same time a controller and an invariant set 
for the controlled system under all admissible disturbances. 
We propose a computational method to solve this problem. 
Given a candidate polyhedral invariant, we show that controller synthesis can be formulated as 
an optimization problem involving polynomial cost functions over bounded polytopes for which effective linear programming relaxations can be obtained. Then, we propose an iterative approach
to compute the controller and the polyhedral invariant at once. Each iteration of the approach
mainly consists in solving two linear programs (one for the controller and one for the invariant) and is thus computationally tractable.
Finally, we show with several examples the usefulness of our method in applications.
\end{abstract}

\section{Introduction}

The design of nonlinear systems remains a challenging problem in control science. 
In the past decade, building on spectacular breakthroughs in optimization over polynomial functions~\cite{Lasserre2001,Parrilo2003},
several computational methods have been developed for synthesizing controllers for polynomial dynamical systems~\cite{Prajna2004,Lasserre2008}. These approaches have shown successful for several synthesis problems 
such as stabilization or optimal control in which Lyapunov functions and cost functions can be represented or approximated by polynomials. However, these approaches are not suitable for 
some other problems such as those involving polynomial dynamical systems with constraints on states and inputs, and subject to bounded disturbances. 

In this paper, we consider a control synthesis problem for this class of systems.
More precisely, given a polynomial dynamical system with input constraints and bounded disturbances,
given a set of initial states $\underline{P}$ and a set of safe states $\overline P$, we aim at synthesizing a controller satisfying the input constraints and such that trajectories starting in $\underline P$ remain in $\overline P$
for all possible disturbances. This problem can be solved by computing jointly the controller and an invariant set 
for the controlled system which contains $\underline P$ and is included in $\overline P$ (see e.g.~\cite{blanchini99}).
We propose a computational method to solve this problem. 
We use parameterized template expressions for the controller and the invariant.
Given a candidate polyhedral invariant, we show that controller synthesis can be formulated as 
an optimization problem involving polynomial objective functions over bounded polytopes.
Recently, using various tools such as the blossoming principle~\cite{Ramshaw89}
for polynomials, multi-affine functions~\cite{Belta06} and Lagrangian duality,
it has been shown how effective linear programming relaxations can be obtained for 
such optimization problems~\cite{Bensassi2010}. We then propose an iterative approach
to compute jointly a controller and a polyhedral invariant. Each iteration of the approach
mainly consists in solving two linear programs and is thus computationally tractable.
Finally, we show applications of our approach to several examples.

\section{Problem Formulation}

In this work, we consider a  nonlinear affine control system subject to input constraints and bounded disturbances:
\begin{equation}
\label{eq:odeg}
\dot x(t) = f(x(t),d(t))+g(x(t),d(t)) u(t),\; d(t) \in D\;, u(t)\in U
\end{equation}
where $x(t)\in R_X \subseteq \R^n$ denotes the state of the system, $d(t)\in D\subseteq \R^m$ is an external disturbance and
$u(t) \in U \subseteq \R^p$ is the control input. 
We assume that the vector field $f:\R^n \times \R^m\rightarrow \R^n$ and 
the control matrix $g:\R^n \times \R^m\rightarrow \R^{(n \times p)}$, defining the dynamics of the system, 
are multivariate polynomial maps. We also assume that the set of states is a bounded rectangular domain:
$R_X=[\underline{x_1},\overline{x_1}] \times \dots \times [\underline{x_n},\overline{x_n}]$ with 
$\underline{x_k}<\overline{x_k}$ for all $k\in \{1,\dots,n\}$; and
that the set of disturbances $D$ and the set of inputs 
$U$ are convex compact polytopes:
$$
D=\left\{d \in \R^m |\; \alpha_{D,k}\cdot d \le \beta_{D,k} ,\; \forall k\in \mathcal K_D \right\} \text{ and }
U=\left\{u \in \R^p |\; \alpha_{U,k}\cdot u \le \beta_{U,k} ,\; \forall k\in \mathcal K_U \right\}
$$
where $\alpha_{D,k}\in \R^m$, $\beta_{D,k}\in \R$,
$\alpha_{U,k}\in \R^p$, $\beta_{U,k} \in \R$,
$\mathcal K_D$ and $\mathcal K_U$ are finite sets of indices. 
We will denote by $R_D=[\underline{d_1},\overline{d_1}] \times \dots \times [\underline{d_m},\overline{d_m}]$ the interval hull of polytope $D$, that is the smallest rectangular domain containing $D$; and by
$V_X=\{\underline{x_1},\overline{x_1}\} \times \dots \times \{\underline{x_n},\overline{x_n}\}$ and 
$V_D=\{\underline{d_1},\overline{d_1}\} \times \dots \times \{\underline{d_m},\overline{d_m}\}$ the set of 
vertices of $R_X$ and $R_D$.
The present work deals with controller synthesis for a notion of invariance defined as follows:
\begin{definition}
Consider a set of states $P\subseteq R_X$ and a controller $h:R_X\rightarrow U$, the controlled system 
\begin{equation}
\label{eq:odec}
\dot x(t) = f(x(t),d(t))+g(x(t),d(t)) h(x(t)),\; d(t) \in D, 
\end{equation}
is said to be $P$-invariant if all trajectories
with $x(0)\in P$ satisfy $x(t)\in P$ for all $t\ge 0$. 
\end{definition}

Let us remark that this is a notion of robust invariance since it has to hold for all possible disturbances. 
Let $\underline P \subseteq \overline P \subseteq R_X$ be convex compact polytopes.
In this paper, we consider the problem of synthesizing a
controller $h$ for system~(\ref{eq:odeg}) such that all controlled trajectories
starting in $\underline P$ remain in $\overline P$ forever. This can be seen as a safety property where 
$\underline P$ is the set of initial states and
$\overline P$ is the set of safe states.
The problem can be solved synthesizing jointly a controller and a polyhedral invariant $P\subseteq R_X$ containing 
$\underline P$ and included in $\overline P$:
\begin{problem}\label{prob}
Synthesize a controller $h: R_X \rightarrow U$ and a convex compact polytope $P$
such that $\underline P \subseteq P \subseteq \overline P$  and the controlled system (\ref{eq:odec})
is $P$-invariant.
\end{problem}

In the following, we describe an approach to solve this problem. To restrict the search space, we shall use parameterized
template expressions for the controller $h$ and the invariant $P$.
Firstly, we will impose the orientation of the facets of polytope $P$ by choosing normal vectors 
in the set $\{\gamma_k \in \R^n |\; k\in \mathcal K_X\}$ where $\mathcal K_X$ is a finite set of indices.
Then, polytope $P$ can be written under the form 
$$
P=\left\{x\in \R^n |\; \gamma_k\cdot x \le \eta_k ,\; \forall k\in \mathcal K_X \right\}
$$
where the vector $\eta \in \R^{|\mathcal K_X|}$, to be determined, specifies the position of the facets. The facets of $P$ are denoted by $F_k$ for $k\in \mathcal K_X$, where
$
F_k=\left\{x\in \R^n |\; \gamma_k\cdot x =\eta_k \text{ and } \gamma_i\cdot x \le \eta_i ,\; \forall i\in \mathcal K_X\setminus \{k\} \right\}.
$
For simplicity, we will assume that the polytopes $\underline P$ and $\overline P$ are of the form:
$
\underline P =\{x\in \R^n |\; \gamma_k\cdot x \le \underline{\eta}_k ,\; \forall k\in \mathcal K_X \} \text{ and }
\overline P =\{x\in \R^n |\; \gamma_k\cdot x \le \overline{\eta}_k ,\; \forall k\in \mathcal K_X \}.
$
Then, the condition $\underline P \subseteq P \subseteq \overline P$ translates to 
$\underline{\eta}_k \le \eta_k \le \overline{\eta}_k$, for $k\in \mathcal K_X$. 
Secondly, we will search the controller $h$ in a subspace spanned by a polynomial matrix:
$$
h(x)=H(x) \theta
$$
where $\theta \in \R^q$ is a parameter to be determined and the matrix $H:\R^n \rightarrow \R^{(p \times q)}$ is a given multivariate polynomial map. 
The use of a template expression is natural when searching for a controller with a particular structure.
The input constraint (i.e. for all $x\in R_X$, $h(x) \in U$) is then equivalent to
\begin{equation}
\label{eq:inputcons}
 \forall k\in \mathcal K_U,\; \forall x\in R_X,\; \alpha_{U,k}\cdot H(x)\theta \le \beta_{U,k}.
\end{equation}

Under these assumptions, the dynamics of the controlled system~(\ref{eq:odec}) can be rewritten under the form
$$
\dot x(t) = f(x(t),d(t))+G(x(t),d(t)) \theta,\; d(t) \in D, 
$$
where the matrix of polynomials $G(x,d)=g(x,d)H(x)$. 
From the standard characterization of invariant sets (see~\cite{aubin1991}), it follows that 
the controlled system (\ref{eq:odec}) is $P$-invariant if and only if
\begin{equation}
\label{eq:car}
 \forall k\in \mathcal K_X,\; \forall x\in F_k, \forall d\in D,\; \gamma_k\cdot (f(x,d)+G(x,d) \theta) \le 0.
\end{equation}
Then, Problem~\ref{prob} can be solved by computing vectors $\theta \in \R^q$ and $\eta \in \R^{|\mathcal K_X|}$ with
$\underline{\eta}_k \le \eta_k \le \overline{\eta}_k$ for all $k\in \mathcal K_X$,  and such that
(\ref{eq:inputcons}) and (\ref{eq:car}) hold.
In the following, we first show how, given the vector $\eta \in \R^{|\mathcal K_X|}$
(and hence the polytope $P$),
we can compute, using linear programming, the parameter $\theta$ (and hence the controller $h$) 
such that the controlled system~(\ref{eq:odec}) is $P$-invariant.
Then, we show how to compute jointly the controller $h$ and the polytope $P$ using an iterative approach 
based on sensitivity analysis of linear programs. Before that, we shall review some recent results on linear relaxations
for optimization of polynomials over bounded polytopes~\cite{Bensassi2010}.

\section{Optimization of Polynomials over Polytopes}
\label{sec:opt}
In this section, we review some recent results of~\cite{Bensassi2010} that will be useful for solving Problem~\ref{prob}.
Let us consider the following optimization problem involving a polynomial on a bounded polytope:
\begin{equation}
\label{eq:opt}
\begin{array}{llr}
\text{minimize} & c \cdot p(y)\\
\text{over} & y\in R, \\
\text{subject to} 
& a_i \cdot y \le b_i, & i\in I, \\
& a_j \cdot y = b_j, & j\in J,
\end{array}
\end{equation}
where $p:\R^m \rightarrow \R^n$ is a multivariate polynomial map, $c \in \R^n$,
$R=[\underline{y_1},\overline{y_1}] \times \dots \times [\underline{y_m},\overline{y_m}]$ is a rectangle of $\R^m$; 
$I$ and $J$ are finite sets of indices; 
$a_k\in \R^m$, $b_k\in \R$, for all $k\in I\cup J$.
Let us remark that even though the polytope defined by the constraints indexed by $I$ and $J$ is unbounded 
in $\R^m$, the fact that we consider $y\in R$ which is a bounded rectangle of $\R^m$ 
results in an optimization problem on a bounded (not necessarily full dimensional) polytope of $\R^m$. 
Let $p^*$ denote the optimal value of problem~(\ref{eq:opt}).
The approach presented in~\cite{Bensassi2010} allows us to compute a guaranteed lower bound $d^*$ of $p^*$.
The approach is as follows. First, using the so-called blossoming principle~\cite{Ramshaw89}, we transform problem~(\ref{eq:opt})
into an  equivalent optimization problem involving a multi-affine function on a polytope. The dual of this problem 
is then a linear program easily solvable and whose optimal value is a guaranteed lower bound of $p^*$.

\subsection{Blossoming principle}

Multi-affine functions form a particular class of multivariate polynomials.
Essentially, a multi-affine function is a function which is affine in each of its variables when the other variables are regarded as constant. 
\begin{definition} A multi-affine function $q:\R^M \rightarrow \R$ is a multivariate polynomial in the variables $z_1,\dots,z_M$ where the degree of $g$ in each of its 
variables is at most $1$:
$$
q(z)=q(z_1,\dots,z_M)=\sum_{(d_1,\dots,d_M)\in \{0,1\}^M } q_{(d_1,\dots,d_M)} z_1^{d_1}\dots z_M^{d_M}
$$
where $q_{(d_1,\dots,d_M)} \in \R$ for all $(d_1,\dots,d_M)\in \{0,1\}^M$.
A map $q:\R^M\rightarrow \R^n$ is a multi-affine map if each of its components is a multi-affine function.
\end{definition}

It is shown in~\cite{Belta06} that a multi-affine function $q$ is uniquely 
determined by its values at the vertices of a rectangle $R'$ of $\R^M$. 
Moreover, for all $x\in R'$, $q(x)$ is a convex combination 
of the values at the vertices so that we have the following result:
\begin{lemma}\label{lem:conv}
Let $q:\R^M \rightarrow \R$ be a multi-affine function and $R'$ a rectangle of $\R^M$ with set of vertices $V'$, then
$
\displaystyle{\min_{x\in R'} q(x)= \min_{v\in V'} q(v).}
$
\end{lemma}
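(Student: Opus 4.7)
My plan is to prove this by induction on the number of variables $M$, leveraging the essential property that a multi-affine function restricted by fixing all but one coordinate becomes an affine function in the remaining coordinate. This property is immediate from the monomial expansion in the definition: if $q(z_1,\dots,z_M) = \sum_{(d_1,\dots,d_M)\in \{0,1\}^M} q_{(d_1,\dots,d_M)} z_1^{d_1}\cdots z_M^{d_M}$, then grouping terms by the exponent of $z_k$ displays the partial function as affine in $z_k$, and fixing a single variable leaves a multi-affine function in the remaining $M-1$ variables.

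First I would handle the base case $M=1$: here $q$ is affine on the interval $R' = [\underline{z_1},\overline{z_1}]$, and an affine function on a compact interval achieves its minimum at one of the two endpoints, i.e.\ in $V' = \{\underline{z_1},\overline{z_1}\}$.

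For the inductive step, assume the claim holds for multi-affine functions on rectangles of $\R^{M-1}$. Write $R' = [\underline{z_1},\overline{z_1}] \times R''$ where $R''$ is a rectangle of $\R^{M-1}$ with vertex set $V''$, so that $V' = \{\underline{z_1},\overline{z_1}\} \times V''$. For any fixed $x'' \in R''$, the map $z_1 \mapsto q(z_1,x'')$ is affine, hence
\[
\min_{z_1 \in [\underline{z_1},\overline{z_1}]} q(z_1,x'') \;=\; \min\bigl(q(\underline{z_1},x''),\, q(\overline{z_1},x'')\bigr).
\]
Swapping the order of minimization yields
\[
\min_{x \in R'} q(x) \;=\; \min_{x'' \in R''} \min\bigl(q(\underline{z_1},x''),\, q(\overline{z_1},x'')\bigr).
\]
Now $x'' \mapsto q(\underline{z_1},x'')$ and $x'' \mapsto q(\overline{z_1},x'')$ are each multi-affine in $M-1$ variables, so by the induction hypothesis each attains its minimum over $R''$ at some vertex in $V''$. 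Combining these, the joint minimum is attained at a point of $\{\underline{z_1},\overline{z_1}\} \times V'' = V'$, which completes the induction.

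I do not expect a serious obstacle here; the only subtle point is ensuring that the class of multi-affine functions is closed under the restriction obtained by fixing one coordinate to a constant, which is transparent from the monomial form in the definition. Everything else is routine once this closure property is recorded.
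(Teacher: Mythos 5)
Your proof is correct. The induction is sound: the closure property you flag (fixing one coordinate of a multi-affine function leaves a multi-affine function in the remaining variables, and the partial map in any single variable is affine) is indeed immediate from the monomial expansion, the base case is the standard fact that an affine function on a compact interval is minimized at an endpoint, and the interchange $\min_{x\in R'} = \min_{x''\in R''}\min_{z_1}$ together with $\min_{x''}\min(A(x''),B(x'')) = \min\bigl(\min_{x''}A,\min_{x''}B\bigr)$ correctly reduces the claim to the induction hypothesis. Your route is, however, different from the paper's: the paper does not argue by induction but simply invokes the result of Belta and Habets that a multi-affine function is uniquely determined by its vertex values and that $q(x)$ is a \emph{convex combination} $\sum_{v\in V'}\lambda_v(x)\,q(v)$ with nonnegative barycentric weights summing to one, from which $q(x)\ge\min_{v\in V'}q(v)$ is immediate. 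The convex-combination representation is the stronger statement (it also underlies the Bernstein-type relaxations used later in the paper), but it is itself usually proved by exactly the coordinate-by-coordinate interpolation you perform; your argument is therefore more elementary and self-contained, at the price of establishing only the minimization property rather than the full interpolation formula.
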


The blossoming principle (see e.g.~\cite{Ramshaw89}) consists in mapping the set of polynomial maps to the set of multi-affine maps as follows.
Let $p:\R^m\rightarrow \R^n$ be a polynomial map. Let $\delta_1,\dots,\delta_m$ denote the degree of 
$p$ in the variables $y_1,\dots,y_m$ respectively. 
Let $\Delta=\{0,\dots,\delta_1\} \times \dots \times \{0,\dots,\delta_m\}$, then for all $y\in \R^m$,
$p(y)=(p_1(y),\dots,p_n(y))$, where for all $j=1,\dots,n$, the components $p_j:\R^m \rightarrow \R$ are 
multivariable polynomial functions that can be written under the form:
$$
p_j(y)=p_j(y_1,\dots,y_m)=\sum_{(d_1,\dots,d_m)\in \Delta } p_{j,(d_1,\dots,d_m)} y_1^{d_1}\dots y_m^{d_m}
$$
where $p_{j,(d_1,\dots,d_m)} \in \R$, for all $(d_1,\dots,d_m)\in \Delta$ and $j=1,\dots,n$.

\begin{definition}\label{def:blossom}
The blossom of the polynomial map $p:\R^m \rightarrow \R^n$ is the map
$q:\R^{\delta_1+\dots+\delta_m} \rightarrow \R^n$ whose components are given for $j=1,\dots,n$ and 
 $z=(z_{1,1},\dots,z_{1,\delta_1},\dots,z_{m,1},\dots,z_{m,\delta_m})\in \R^{\delta_1+\dots+\delta_m}$ by
$$
q_j(z)=\sum_{(d_1,\dots,d_m)\in \Delta } p_{j,(d_1,\dots,d_m)} B_{d_1,\delta_1}(z_{1,1},\dots,z_{1,\delta_1})\dots
B_{d_m,\delta_m}(z_{m,1},\dots,z_{m,\delta_m})
$$
with
$$
B_{d,\delta}(z_1,\dots,z_\delta) = \frac{1}{\left(\begin{smallmatrix} \delta \\ d \end{smallmatrix} \right)} \sum_{\sigma \in C(d,\delta)} 
z_{\sigma_1} \dots z_{\sigma_d}
$$
where $C(d,\delta)$ denotes the set of combinations of $d$ elements in  $\{1,\dots,\delta\}$.
\end{definition}

\begin{example} The blossom of the polynomial map $p(x_1,x_2)=(x_1+x_2^3,x_1^2x_2^2)$ is $q:\R^5 \rightarrow \R^2$ whose components are given by
$$
\begin{array}{rll}
q_1(z_{1,1},z_{1,2},z_{2,1},z_{2,2},z_{2,3})&=&\frac{1}{2} (z_{1,1}+z_{1,2})+z_{2,1}z_{2,2}z_{2,3} \\
q_2(z_{1,1},z_{1,2},z_{2,1},z_{2,2},z_{2,3})&=&z_{1,1}z_{1,2}\frac{1}{3}(z_{2,1}z_{2,2}+z_{2,2}z_{2,3}+z_{2,3}z_{2,1})
\end{array}
$$
\end{example}

From Definition~\ref{def:blossom}, it follows that the blossom $q$ of the polynomial map $p$
satisfies the following properties~\cite{Ramshaw89}:
\begin{enumerate}
\item It is a multi-affine map.

\item It satisfies the diagonal property:
$
q(z_1,\dots,z_1,\dots,z_m,\dots,z_m) = p(z_1,\dots,z_m).
$

\item Let $z,z' \in \R^{\delta_1+\dots+\delta_m}$, with 
$z=(z_{1,1},\dots,z_{1,\delta_1},\dots,z_{m,1},\dots,z_{m,\delta_m})$  and 
$z'=(z'_{1,1},\dots,z'_{1,\delta_1},$ $\dots,z'_{m,1},\dots,z'_{m,\delta_m}),$ 
we denote
$z \cong z'$ if, for all $j=1,\dots,m$, there exists a permutation $\pi_j$
such that $(z_{j,1},\dots,z_{j,\delta_j})= \pi_j(z'_{j,1},\dots,z'_{j,\delta_j})$.
It is easy to see that $\cong$ is an equivalence relation. Moreover, 
for all $z\cong z'$, $q(z)=q(z')$.
\end{enumerate}
The diagonal property clearly allows us to recast problem (\ref{eq:opt}) for a multivariate polynomial map $p$ as a 
problem involving its blossom $q$ subject to inequality and equality constraints:
\begin{equation}
\label{eq:opt1}
\begin{array}{llr}
\text{minimize } & c \cdot q(z)\\
\text{over } & z\in R', \\
\text{subject to } 
& {a_i}' \cdot z \le b_i, & i \in I \\
& a_j' \cdot z = b_j, & j\in J, \\
& z_{k,l}=z_{k,l+1} ,& \; k=1,\ldots,n,\; l=1,\ldots,\delta_{j}-1.
\end{array}
\end{equation}
where $R'=[\underline{y_1},\overline{y_1}]^{\delta_1} \times \dots \times [\underline{y_m},\overline{y_m}]^{\delta_m}$ 
and the vectors $a_k'$ are given for $k\in I \cup J$ by
$$
a_k'={(\frac{a_{k,1}}{\delta_1},\dots,\frac{a_{k,1}}{\delta_1},\dots,\frac{a_{k,m}}{\delta_m},\dots,\frac{a_{k,m}}{\delta_m})}.
$$ 

\subsection{Linear programming relaxation}

In~\cite{Bensassi2010}, based on Lemma~\ref{lem:conv}, it is shown that the Lagrangian dual of problem~(\ref{eq:opt1}) is actually a linear program. After some complexity reduction enabled by the properties of blossoms stated above, the following result can be stated:
\begin{theorem}[\cite{Bensassi2010}]
\label{th:dual}
The dual of problem (\ref{eq:opt1}) is equivalent to the following linear program:
\begin{equation}
\label{eq:dual}
\begin{array}{llr}
\text{maximize} & t\\
\text{over} & t\in \R,\; \lambda\in \R^{|I|},\; \mu \in \R^{|J|}\\
\text{subject to} &\lambda_i \ge 0, & i\in I \\ 
&t \le c \cdot q(\overline{v})+\displaystyle{\sum_{i\in I} \lambda_i (a_i' \cdot \overline{v} - b_i)}+
\displaystyle{\sum_{j\in J} \mu_j (a_j' \cdot \overline{v} - b_j)} , & \overline{v}\in \overline{V'}.
\end{array}
\end{equation}
where $\overline{V'}=V'/\cong$ with  
$V'=\{\underline{y_1},\overline{y_1}\}^{\delta_1} \times \dots \times \{\underline{y_m},\overline{y_m}\}^{\delta_m}$. 
Moreover the optimal value $d^*$ of this linear program
is a guaranteed lower bound of the optimal value $p^*$ of problem (\ref{eq:opt}).
\end{theorem}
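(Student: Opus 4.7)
The plan is to derive the Lagrangian dual of problem~(\ref{eq:opt1}) explicitly and then exploit multi-affinity and the symmetry of blossoms to reduce the dual to the finite linear program~(\ref{eq:dual}).

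First I would introduce multipliers $\lambda_i\ge 0$ for the inequalities $a'_i\cdot z\le b_i$, $i\in I$, and multipliers $\mu_j\in\R$ for the equalities $a'_j\cdot z=b_j$, $j\in J$, and \emph{relax} the symmetry constraints $z_{k,l}=z_{k,l+1}$ by dropping them entirely. The Lagrangian is then
$$
L(z,\lambda,\mu)=c\cdot q(z)+\sum_{i\in I}\lambda_i(a'_i\cdot z-b_i)+\sum_{j\in J}\mu_j(a'_j\cdot z-b_j),
$$
and the dual function is $g(\lambda,\mu)=\min_{z\in R'}L(z,\lambda,\mu)$. Because every $z$ feasible for~(\ref{eq:opt1}) satisfies $L(z,\lambda,\mu)\le c\cdot q(z)$, and because by the diagonal property every feasible point of~(\ref{eq:opt1}) corresponds to a feasible point of~(\ref{eq:opt}) with the same objective value (and conversely, given $y$ feasible for~(\ref{eq:opt}), $z=(y_1,\dots,y_1,\dots,y_m,\dots,y_m)$ is feasible for~(\ref{eq:opt1})), one obtains the weak-duality inequality $g(\lambda,\mu)\le p^*$ for all $\lambda\ge0$ and $\mu$. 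This immediately yields $d^*\le p^*$ once the dual problem has been shown to coincide with~(\ref{eq:dual}).

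The second step is to evaluate $g(\lambda,\mu)$ exactly. Since each component $q_j$ is multi-affine in $z$ and the penalty terms are linear, hence multi-affine, in $z$, the map $z\mapsto L(z,\lambda,\mu)$ is multi-affine on the rectangle $R'$. Lemma~\ref{lem:conv} applied componentwise (after summing with the scalar $c$) gives
$$
g(\lambda,\mu)=\min_{v\in V'}L(v,\lambda,\mu),
$$
so that the dual $\max_{\lambda\ge0,\mu}g(\lambda,\mu)$, once rewritten via an epigraph variable $t$, is the LP obtained from~(\ref{eq:dual}) by replacing $\overline{V'}$ with $V'$.

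Finally, I would reduce the vertex set $V'$ to the quotient $\overline{V'}=V'/{\cong}$. If $v\cong v'$, property~(3) of the blossom gives $q(v)=q(v')$; moreover the explicit form of $a'_k$, in which the coefficient $a_{k,r}/\delta_r$ is repeated $\delta_r$ times across the block $(z_{r,1},\dots,z_{r,\delta_r})$, makes $a'_k\cdot z$ symmetric within each block and so forces $a'_k\cdot v=a'_k\cdot v'$. Therefore the constraint indexed by $v$ in the LP is identical to the one indexed by $v'$, and one may keep a single representative per equivalence class, recovering exactly~(\ref{eq:dual}). The main obstacle I expect is precisely this last bookkeeping step: one must verify cleanly that \emph{every} term appearing inside the constraint—the blossom value and each of the linear penalty terms—is invariant under $\cong$, so that passing from $V'$ to $\overline{V'}$ is lossless; once this is in place, the result is a routine application of Lagrangian weak duality together with Lemma~\ref{lem:conv}.
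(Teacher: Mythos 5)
Your proposal is correct and follows exactly the route the paper itself sketches (the paper does not reprove this theorem but defers to \cite{Bensassi2010}, describing the argument as: form the Lagrangian dual, observe via Lemma~\ref{lem:conv} that the inner minimization of the multi-affine Lagrangian over $R'$ reduces to the vertices $V'$, then collapse $V'$ to $\overline{V'}$ using the symmetry of the blossom and of the vectors $a_k'$). One small caveat: by dropping the symmetry constraints $z_{k,l}=z_{k,l+1}$ rather than dualizing them, you cleanly establish the operative claim $d^*\le p^*$, but you do not literally prove the first assertion that \emph{the dual of}~(\ref{eq:opt1}) is \emph{equivalent} to~(\ref{eq:dual}); for that you would add multipliers $\nu_{k,l}$ for the symmetry constraints and show they can be taken to be zero without loss, e.g.\ by averaging the term $\sum_{k,l}\nu_{k,l}(v_{k,l}-v_{k,l+1})$ over each $\cong$-equivalence class of vertices (on which $L$ is constant) to see that these multipliers cannot improve the dual value. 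This is the ``complexity reduction'' the paper alludes to; with that one addition your argument gives the full statement.
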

 
The previous theorem provides a simple and efficient way to compute a guaranteed lower bound of the minimal value of a polynomial on a bounded 
polytope. In the following section, we will show how this can be used to solve our controller synthesis problem.

\section{Controller Synthesis}
In this section, given polytope $P=\left\{x\in \R^n |\; \gamma_k\cdot x \le \eta_k ,\; \forall k\in \mathcal K_X \right\}$,
we show how to design a controller $h:R_X\rightarrow U$ such that the system (\ref{eq:odec}) is $P$-invariant.
As explained before, we search  the controller  in a subspace spanned by a polynomial matrix:
$h(x)=H(x) \theta$ where $\theta \in \R^q$.
Let $F(x,d)=f(x,d)+G(x,d)\theta$, it is a polynomial of degree $ \delta_1,\dots,\delta_n,\rho_1,\dots,\rho_m$ 
in the variables $x_1,\dots,x_n,d_1,\dots,d_m$.
Its blossom is $F_b=f_b +G_b \theta$ where $f_b$ and $G_b$ are the blossoms of $f$ and $G$
regarded as polynomials of degree $ \delta_1,\dots,\delta_n,\rho_1,\dots,\rho_m$.
Let $H_b$ be the blossom of the matrix $H$ regarded as a polynomial map of degree $\delta_1,\dots,\delta_n$
in the variables $x_1,\dots,x_n$. Let 
$R_X'=[\underline{x_1},\overline{x_1}]^{\delta_1} \times \dots \times [\underline{x_n},\overline{x_n}]^{\delta_n}$,
$V_X'=\{\underline{x_1},\overline{x_1}\}^{\delta_1} \times \dots \times \{\underline{x_n},\overline{x_n}\}^{\delta_n}$,
$V_D'=\{\underline{d_1},\overline{d_1}\}^{\rho_1} \times \dots \times \{\underline{d_m},\overline{d_m}\}^{\rho_m}$,
$\overline{V_X'}={V_X'}/\cong$ and $\overline{V_D'}={V_D'}/\cong$.

We first establish sufficient conditions such that $h(x) \in U$ for all $x\in R_X$:
\begin{lemma}
\label{lem:cond}
If for all $l \in \mathcal{K_U}$, for all $\overline{v_X}\in \overline{V_X'}$,
$\alpha_{U,l} \cdot H_b(\overline{v_X})\theta \le \beta_{U,l}$, then  for all $x\in R_X$, $h(x) \in U$.
\end{lemma}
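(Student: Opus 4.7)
The plan is to reduce the claim to the multi-affine vertex property (Lemma~\ref{lem:conv}) applied to the blossom of $H$, combined with the diagonal property of blossoms.

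First I would unfold what needs to be proven. The condition $h(x) \in U$ for $x \in R_X$ is equivalent (by the $\mathcal H$-description of $U$) to showing that for every $l \in \mathcal K_U$ and every $x \in R_X$, one has $\alpha_{U,l}\cdot H(x)\theta \le \beta_{U,l}$. So it suffices to fix an arbitrary $l \in \mathcal K_U$ and bound the scalar polynomial function $x \mapsto \alpha_{U,l}\cdot H(x)\theta$ on the rectangle $R_X$.

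Next, I would lift this scalar polynomial to its blossom. Since $H_b$ is the blossom of $H$ (as a polynomial map of degree $\delta_1,\dots,\delta_n$ in $x_1,\dots,x_n$), the linear combination $z \mapsto \alpha_{U,l}\cdot H_b(z)\theta$ is a multi-affine function on $\R^{\delta_1+\dots+\delta_n}$. The diagonal property of the blossom gives
\[
\alpha_{U,l}\cdot H(x)\theta = \alpha_{U,l}\cdot H_b(\underbrace{x_1,\dots,x_1}_{\delta_1},\dots,\underbrace{x_n,\dots,x_n}_{\delta_n})\theta,
\]
and the right-hand evaluation point lies in $R_X'$ whenever $x \in R_X$. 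So it is enough to show $\alpha_{U,l}\cdot H_b(z)\theta \le \beta_{U,l}$ for every $z \in R_X'$.

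Now I would apply Lemma~\ref{lem:conv} in its max form (which follows by considering $-q$): the maximum of the multi-affine function $z \mapsto \alpha_{U,l}\cdot H_b(z)\theta$ over $R_X'$ is attained at some vertex $v \in V_X'$. Finally, I would use the $\cong$-invariance of blossoms: since $H_b(z)=H_b(z')$ whenever $z \cong z'$, the maximum over $V_X'$ equals the maximum over the set of equivalence classes $\overline{V_X'}=V_X'/\cong$. By hypothesis this maximum is bounded by $\beta_{U,l}$, which yields the desired inequality and hence $h(x) \in U$. There is no real obstacle; the only thing requiring care is noting that the maximum (not minimum) version of Lemma~\ref{lem:conv} is what is actually needed, and that the bound on all representatives in $\overline{V_X'}$ implies the bound on all of $V_X'$ by $\cong$-invariance.
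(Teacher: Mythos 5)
Your proof is correct and follows essentially the same route as the paper's: combine the $\cong$-invariance of the blossom to pass from $\overline{V_X'}$ to $V_X'$, the vertex property of multi-affine maps (Lemma~\ref{lem:conv}) to extend the bound to all of $R_X'$, and the diagonal property to return to $H$ on $R_X$. The only difference is cosmetic — you argue backwards from the goal and explicitly note that the maximization form of Lemma~\ref{lem:conv} is what is used, a detail the paper leaves implicit.
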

\begin{proof}
If for all $l \in \mathcal{K_U}$, for all $\overline{v_X}\in \overline{V_X'}$,
$\alpha_{U,l} \cdot H_b(\overline{v_X})\theta \le \beta_{U,l}$, then using the third property of the blossom 
we have for all $l \in \mathcal{K_U}$, for all ${v_X}\in {V_X'}$,
$\alpha_{U,l} \cdot H_b(v_X)\theta \le \beta_{U,l}$.
Since $H_b$ is a multi-affine map, Lemma \ref{lem:conv} implies that 
for all $l \in \mathcal{K_U}$, for all $z\in R_X'$,
$\alpha_{U,l} \cdot H_b(z)\theta \le \beta_{U,l}$.
Then, using the diagonal property of the blossom, we obtain 
for all $l \in \mathcal{K_U}$, for all $x\in R_X$,
$\alpha_{U,l} \cdot H(x)\theta \le \beta_{U,l}$
which is equivalent to say that for all $x\in R_X$, $h(x)\in U$. 
\end{proof}

The previous result gives a finite set of linear constraints which must be satisfied by parameter $\theta$.
We now establish conditions ensuring that the polytope $P$ is an invariant for system~(\ref{eq:odec}).
Let $k \in \mathcal K_X$, we will say that facet $F_k$ of the polytope $P$ is {\it blocked}
if for all $x\in F_k$, for all $d\in D$, $\gamma_k\cdot (f(x,d)+G(x,d) \theta) \le 0$.
It is clear that system~(\ref{eq:odec}) is $P$-invariant  if and only if all facets are blocked.

\begin{lemma}
\label{prop:block}
 Let $\theta \in \R^q$ and $ k\in \mathcal K_X$, then the facet $F_k$
is blocked if and only if the optimal value $p_k^*(\theta)$ of the following optimization problem is non negative:
\begin{equation}
\label{eq:primal1}
\begin{array}{llr}
\text{minimize} & -\gamma_k\cdot (f(x,d)+G(x,d)\theta)\\
\text{over} & x\in R_X, d\in R_D \\
\text{subject to} 
& \alpha_{D,j} \cdot d \le \beta_{D_j}, & j\in \mathcal K_D, \\
& \gamma_i \cdot x \le \eta_i, & i\in \mathcal K_X\setminus \{k\}, \\
& \gamma_k \cdot x = \eta_k.
\end{array}
\end{equation}
A guaranteed lower bound $d_k^*(\theta)$ of $p_k^*(\theta)$ 
is given by the optimal value of the following linear program:
\begin{equation}
\label{eq:block}
\begin{array}{lll}
\text{maximize} & t\\
\text{over} & t\in \R,\; \lambda^k\in \R^{|\mathcal K_X|},\tilde{\lambda}^k\in \R^{|\mathcal K_D|},\\
\text{subject to} &\displaystyle{\lambda_i^k} \ge 0, & i\in \mathcal K_X\setminus \{k\}, \\ 
&{\tilde{\lambda}^k}_j\ge 0, & j\in \mathcal K_D,\\
&t \le -\gamma_k \cdot (f_b(\overline{v_X},\overline{v_D})+ G_b(\overline{v_X},\overline{v_D})\theta) 
 \\
&\; \; \; \; \; +\displaystyle{\sum_{i=1}^{|\mathcal K_X|} \displaystyle{\lambda_i^k} ({\gamma_i}' \cdot \overline{v_X} - \eta_i)} +\displaystyle{\sum_{j=1}^{|\mathcal K_D|} {\tilde{\lambda}^k}_j (\alpha_{D,j}'\cdot \overline{v_D}  - \beta_{D,j})} ,& 
\overline{v_X}\in \overline{{V_X}'}
,\; \overline{v_D} \in \overline{{V_D}'}.
\end{array}
\end{equation}
where for all $i \in \mathcal K_X$ and all $j \in \mathcal K_D$ vectors ${\gamma_i}'$ and $\alpha_{D,j}'$ are given by:
$$
{\gamma_i}'={(\frac{\gamma_{i,1}}{\delta_1},\dots,\frac{\gamma_{i,1}}{\delta_1},\dots,\frac{\gamma_{i,n}}{\delta_n},\dots,\frac{\gamma_{i,n}}{\delta_n})},\;
\alpha_{D,j}'={(\frac{\alpha_{D,j,1}}{\rho_1},\dots,\frac{\alpha_{D,j,1}}{\rho_1},\dots,\frac{\alpha_{D,j,m}}{\rho_m},\dots,\frac{\alpha_{D,j,m}}{\rho_m})}.
$$ 
\end{lemma}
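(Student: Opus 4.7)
The lemma contains two independent claims, which I would handle in sequence.

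I would first prove the equivalence between $F_k$ being blocked and $p_k^*(\theta)\ge 0$ by unwinding the feasible set of (\ref{eq:primal1}). Since $D=\{d:\alpha_{D,j}\cdot d\le\beta_{D,j},\,j\in\mathcal K_D\}$ is already compact and therefore contained in its interval hull $R_D$, the constraints on $d$ in (\ref{eq:primal1}) together with $d\in R_D$ reproduce exactly $D$. On the $x$ side, the equality $\gamma_k\cdot x=\eta_k$ and the inequalities $\gamma_i\cdot x\le\eta_i$ for $i\ne k$ carve out the facet $F_k$, and since $F_k\subseteq P\subseteq\overline P\subseteq R_X$, the constraint $x\in R_X$ is redundant. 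Thus the feasible set is $F_k\times D$, and
$$
p_k^*(\theta)=\min_{(x,d)\in F_k\times D}\bigl(-\gamma_k\cdot(f(x,d)+G(x,d)\theta)\bigr).
$$
This minimum is nonnegative precisely when $\gamma_k\cdot(f(x,d)+G(x,d)\theta)\le 0$ holds throughout $F_k\times D$, i.e.\ exactly when $F_k$ is blocked.

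For the second claim I would simply recognize (\ref{eq:primal1}) as an instance of the generic polynomial program (\ref{eq:opt}) and invoke Theorem \ref{th:dual}. The identification is: joint variable $y=(x,d)$ in the rectangle $R_X\times R_D$ of dimension $n+m$; scalar polynomial objective $-\gamma_k\cdot(f(x,d)+G(x,d)\theta)$ of degrees $(\delta_1,\dots,\delta_n,\rho_1,\dots,\rho_m)$; inequality index set $I=\mathcal K_D\cup(\mathcal K_X\setminus\{k\})$; equality index set $J=\{k\}$. The blossom of the objective is then $-\gamma_k\cdot(f_b+G_b\theta)$, the blossomed domain is the product of rectangles in the $x$- and $d$-blossom variables, and its vertex set modulo $\cong$ is $\overline{V_X'}\times\overline{V_D'}$. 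Plugging these into the template dual (\ref{eq:dual}) yields (\ref{eq:block}), where the nonnegative multipliers $\lambda_i$ of (\ref{eq:dual}) are renamed $\tilde\lambda^k_j$ (for $j\in\mathcal K_D$) and $\lambda^k_i$ (for $i\in\mathcal K_X\setminus\{k\}$), and the free equality multiplier $\mu_k$ is renamed $\lambda^k_k$. The inequality $d_k^*(\theta)\le p_k^*(\theta)$ is then the weak-duality conclusion already built into Theorem \ref{th:dual}.

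The only step that requires actual verification, rather than pure renaming, is that the blossomed coefficient vectors reduce to $\gamma_i'$ and $\alpha_{D,j}'$ as stated. For a constraint involving only the $x$-variables, such as $\gamma_i\cdot x\le\eta_i$, the blossoming formula in (\ref{eq:opt1}) produces a vector in $\R^{\delta_1+\dots+\delta_n+\rho_1+\dots+\rho_m}$ whose entries on the $d$-blossom slots are all zero and whose $x$-blossom entries match $\gamma_i'$; the inner product with $(\overline{v_X},\overline{v_D})$ therefore collapses to $\gamma_i'\cdot\overline{v_X}$, and the argument for $\alpha_{D,j}'$ is symmetric. This bookkeeping check is the main (and only) obstacle, and it is purely notational—the substantive content has already been delivered by Theorem \ref{th:dual}.
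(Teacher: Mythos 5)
Your proof is correct and follows essentially the same route as the paper: the first part by observing that intersecting with $R_X$ and $R_D$ leaves the feasible set $F_k\times D$ unchanged, and the second part by instantiating Theorem~\ref{th:dual} with $y=(x,d)$ and noting that the vertex set modulo $\cong$ factors as $\overline{V_X'}\times\overline{V_D'}$. Your version is in fact more explicit than the paper's (notably in tracking the free multiplier $\lambda^k_k$ for the equality constraint and the zero-padding of the blossomed constraint vectors), but the substance is identical.
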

\begin{proof}
Remarking that $F_k=R_X \cap F_k$ and $D=R_D \cap D$, the first part of the Proposition is obvious.\\
For the second part, let us remark that from the definition of the equivalence relation $\cong$, we have
$(V_X' \times V_D')/\cong$ that is the same as $\overline{{V_X}'} \times 
\overline{{V_D}'}$. Then, we have just 
to apply the approach described in Section~\ref{sec:opt} where $y=(x,d)$ and the multivariate polynomial
 $p(y)$ is equal to $f(x,d)+G(x,d)\theta$.
 \end{proof}

Now we show how to choose $\theta \in \R^q $ such that the associated controller satisfy
for all $x\in R_X$, $h(x)\in U$ and the controlled system (\ref{eq:odec}) is $P$-invariant. 
\begin{proposition}
\label{prop:inv}
Let $d^*$ and $\left(t^*, ({\lambda^{k*}})_{k\in\mathcal K_X},({\tilde{\lambda}^{k*}})_{k\in\mathcal K_D},\theta^*\right)$ be the optimal value and an optimal solution  
of the following linear program:
\begin{equation}
\label{eq:inv}
\begin{array}{lll}
\text{maximize} & t\\
\text{over} & t\in \R,\; {\lambda}^k\in \R^{|\mathcal K_X|},\tilde{\lambda}^k\in \R^{|\mathcal K_D|},{\theta} \in \R^q,& k\in \mathcal{K_X}\\
\text{subject to} & \displaystyle{\lambda_i^k} \ge 0, &k\in \mathcal{K_X}, \quad i\in \mathcal K_X\setminus \{k\}, \\ 
&{\tilde{\lambda}^k}_j\ge 0,&k\in \mathcal{K_X},\quad j\in \mathcal K_D,\\
&\alpha_{U,l} \cdot H_b(\overline{v_X})\theta \le \beta_{U,l} , &l \in \mathcal{K_U},\quad \overline{v_X}\in \overline{V_X'},\\
&t \le -\gamma_k \cdot (f_b(\overline{v_X},\overline{v_D})+ G_b(\overline{v_X},\overline{v_D})\theta) 
 \\
&\; \; \; \; \;+\displaystyle{\sum_{i=1}^{|\mathcal K_X|} \displaystyle{\lambda_i^k} ({\gamma_i}' \cdot \overline{v_X} - \eta_i)}
+\displaystyle{\sum_{j=1}^{|\mathcal K_D|} {\tilde{\lambda}^k}_j (\alpha_{D,j}'\cdot \overline{v_D}  - \beta_{D,j})} ,&k\in \mathcal{K_X},  \overline{v_X}\in \overline{{V_X}'}
,\; \overline{v_D} \in \overline{{V_D}'}.
\end{array}
\end{equation}
Then, if $d^*$ is positive, the controller $h(x)=H(x)\theta^*$ satisfy for all $x\in R_X$, $h(x)\in U$ and
the controlled system~(\ref{eq:odec}) is $P$-invariant.
\end{proposition}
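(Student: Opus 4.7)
The plan is to observe that the linear program~(\ref{eq:inv}) is essentially the aggregation, over a common variable $\theta$ and a common scalar $t$, of the input-constraint conditions from Lemma~\ref{lem:cond} and the facet-blocking dual linear programs from Lemma~\ref{prop:block}. The proof therefore splits naturally into two parts: verifying that $h(x)=H(x)\theta^*$ satisfies the input constraints, and verifying that every facet of $P$ is blocked by the associated closed-loop vector field.

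First I would handle the input constraint. The constraints
$\alpha_{U,l}\cdot H_b(\overline{v_X})\theta\le\beta_{U,l}$
for $l\in\mathcal K_U$ and $\overline{v_X}\in\overline{V_X'}$ appear verbatim in (\ref{eq:inv}), and any optimal solution in particular satisfies them with $\theta=\theta^*$. Applying Lemma~\ref{lem:cond} directly yields $h(x)\in U$ for every $x\in R_X$. No positivity of $d^*$ is needed for this step.

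Next I would establish $P$-invariance. Fix $k\in\mathcal K_X$ and look at the constraints of (\ref{eq:inv}) that involve only $t$, $\lambda^k$, $\tilde\lambda^k$, and $\theta^*$: the sign conditions on $\lambda_i^k$ and $\tilde\lambda_j^k$ together with the inequality
$t\le -\gamma_k\cdot(f_b(\overline{v_X},\overline{v_D})+G_b(\overline{v_X},\overline{v_D})\theta^*)+\sum_i\lambda_i^k(\gamma_i'\cdot\overline{v_X}-\eta_i)+\sum_j\tilde\lambda_j^k(\alpha_{D,j}'\cdot\overline{v_D}-\beta_{D,j})$
over all $(\overline{v_X},\overline{v_D})\in\overline{V_X'}\times\overline{V_D'}$. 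These are exactly the feasibility constraints of the linear program (\ref{eq:block}) from Lemma~\ref{prop:block}, instantiated at $\theta=\theta^*$. Therefore $(t^*,\lambda^{k*},\tilde\lambda^{k*})$ is feasible for (\ref{eq:block}), which gives $d_k^*(\theta^*)\ge t^*=d^*>0$. By Lemma~\ref{prop:block}, $d_k^*(\theta^*)$ is a lower bound of $p_k^*(\theta^*)$, so $p_k^*(\theta^*)>0$ as well, which means facet $F_k$ is blocked. Since $k\in\mathcal K_X$ was arbitrary, every facet is blocked, and, as noted right before Lemma~\ref{prop:block}, this is equivalent to $P$-invariance of the controlled system~(\ref{eq:odec}).

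There is no real obstacle beyond bookkeeping; the main point to make explicit is that the single scalar $t$ in (\ref{eq:inv}) simultaneously lower-bounds all $|\mathcal K_X|$ of the dual values $d_k^*(\theta^*)$, so a single sign check $d^*>0$ certifies positivity for every facet at once. Up to this aggregation observation, the statement follows by directly invoking Lemmas~\ref{lem:cond} and~\ref{prop:block}.
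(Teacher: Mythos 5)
Your proposal is correct and follows essentially the same route as the paper: the paper observes that (\ref{eq:inv}) is equivalent to maximizing $t$ subject to $t\le d_k^*(\theta)$ for all $k$ together with the input constraints, and then invokes Lemma~\ref{prop:block} and Lemma~\ref{lem:cond}, which is exactly your decomposition. Your version merely makes explicit the one direction of that equivalence actually needed (feasibility of $(t^*,\lambda^{k*},\tilde\lambda^{k*})$ in (\ref{eq:block}) gives $d_k^*(\theta^*)\ge t^*=d^*$), which is a fine, slightly more careful rendering of the same argument.
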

\begin{proof}
We first start by remarking that problem~(\ref{eq:inv}) is equivalent to the following optimization problem:
\begin{equation}
\label{eq:theta}
 \begin{array}{lll}
\text{maximize} & t\\
\text{over} & t\in \R,\; {\theta} \in \R^q\\
\text{subject to} &\alpha_{U,l} \cdot H_b(\overline{v_X})\theta \le \beta_{U,l} , &l \in \mathcal{K_U},\quad \overline{v_X}\in \overline{V_X'}, \\
& t \le d_k^*(\theta), & k\in \mathcal{K_X}
\end{array}
\end{equation}
where $d_k^*(\theta)$ is the optimal value of linear program~(\ref{eq:block}).
Then, if $d^*\ge 0$, this means that for the optimal $\theta^*$, we have 
for all $k\in \mathcal K_X$, $d_k^*(\theta^*) \ge 0$. Therefore, by Lemma~\ref{prop:block}, all facets of $P$ are blocked
and thus the controlled system (\ref{eq:odec}) is $P$-invariant. The constraints on $\theta$ also ensures,
by Lemma~\ref{lem:cond},
that for all $x\in R_X$, $h(x)\in U$.
\end{proof}

\section{Joint Synthesis of the Controller and the Invariant}

In this section, we present an iterative approach for synthesizing jointly the controller $h$ and the invariant 
polytope $P$ solving Problem~\ref{prob}.
It is based on sensitivity analysis of linear programs.
At each iteration, we use a guess for the invariant polytope $P$. Following the approach described in the
previous section, we try to synthesize a controller $h$ that renders $P$ invariant. If $P$ cannot be made
invariant by this approach, we use sensitivity analysis of linear program~(\ref{eq:inv}) to modify $P$ and obtain a new guess for 
the invariant polytope. The procedure is repeated until Problem~\ref{prob} is solved.

\subsection{Sensitivity analysis}
Let $\eta, \mu \in \R^{|\mathcal K_X|}$, let polytopes
$
P=\left\{x\in \R^n |\; \gamma_k\cdot x \le \eta_k ,\; \forall k\in \mathcal K_X \right\}$
and
$P_{\mu}=\left\{x\in \R^n |\; \gamma_k\cdot x \le \eta_k +\mu_k  ,\; \forall k\in \mathcal K_X \right\}$;
$P_\mu$ can be seen as a perturbation of polytope $P$.
The main result on sensitivity analysis is given by the following proposition:
\begin{proposition} 
\label{th:sens}
Let $d^*$ and $\left(t^*, ({\lambda^{k*}})_{k\in\mathcal K_X},({\tilde{\lambda}^{k*}})_{k\in\mathcal K_D},\theta^*\right)$ denote the optimal value and an optimal solution  
of linear program~(\ref{eq:inv}), let
$d^*_\mu$ denote the optimal value of linear program~(\ref{eq:inv}) where $\eta$ has been replaced 
by $\eta+\mu$, then
$$
 d^*_\mu \ge \displaystyle{\min_{k\in\mathcal K_X} (d^*-{\lambda^{k*}}\cdot \mu}). 
$$
\end{proposition}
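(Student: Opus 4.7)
The plan is to use the optimal solution of the original linear program as a feasible candidate for the perturbed one, then read off the value of the objective this candidate attains.

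First, I would observe that in program~(\ref{eq:inv}) the parameter $\eta$ enters only through the invariance constraints, and only through the term $-\sum_i \lambda_i^k \eta_i$ on the right-hand side; the input constraints on $\theta$ and the sign constraints on the multipliers do not depend on $\eta$ at all. Consequently, replacing $\eta$ by $\eta+\mu$ amounts, for every fixed choice of multipliers, to decreasing the right-hand side of the $k$-th block of invariance constraints by exactly $\lambda^k\cdot \mu$, uniformly over $\overline{v_X}\in\overline{V_X'}$ and $\overline{v_D}\in \overline{V_D'}$.

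Next, I would plug the original optimizer $(\lambda^{k*})_{k\in\mathcal K_X}$, $(\tilde\lambda^{k*})_{k\in\mathcal K_D}$, $\theta^*$ into the perturbed program. The sign constraints and the input constraints remain satisfied without change. For each $k\in\mathcal K_X$ and each $(\overline{v_X},\overline{v_D})$, the perturbed right-hand side equals the original right-hand side minus $\lambda^{k*}\cdot\mu$, and the original right-hand side is at least $t^*=d^*$ by optimality. Hence the largest $t$ compatible with block $k$ under this choice of multipliers is at least $d^*-\lambda^{k*}\cdot\mu$, and the largest $t$ compatible with all blocks simultaneously is at least $\min_{k\in\mathcal K_X}(d^*-\lambda^{k*}\cdot\mu)$.

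Finally, since $(\min_{k}(d^*-\lambda^{k*}\cdot\mu),(\lambda^{k*}),(\tilde\lambda^{k*}),\theta^*)$ is therefore a feasible point of the perturbed program, its objective value is a lower bound for the perturbed optimum $d^*_\mu$, giving the claimed inequality. There is no real obstacle here: the argument is just weak duality applied implicitly through the fact that the dual multipliers of a linear program furnish a valid sensitivity bound, and the only thing to check carefully is the bookkeeping showing that $\eta$ appears linearly with coefficient $-\lambda^k$ in block $k$ only, so that a perturbation $\mu$ affects block $k$ uniformly by $-\lambda^{k*}\cdot\mu$ across all vertex pairs.
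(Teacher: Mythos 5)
Your proof is correct and follows essentially the same route as the paper: both arguments exhibit $\bigl(\min_{k}(d^*-\lambda^{k*}\cdot\mu),(\lambda^{k*}),(\tilde\lambda^{k*}),\theta^*\bigr)$ as a feasible point of the perturbed linear program by noting that the perturbation shifts the right-hand side of block $k$ uniformly by $-\lambda^{k*}\cdot\mu$, and then conclude by feasibility that $d^*_\mu$ is at least this objective value. The bookkeeping you flag (that $\eta$ enters linearly with coefficient $-\lambda^k$ in block $k$ only) is exactly the computation displayed in the paper's proof.
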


\begin{proof}
For all $k\in \mathcal K_X$, for all $\overline{v_X} \in \overline{{V_X}'}$ and 
$\overline{v_D}\in  \overline{{V_D}'}$, we have:
$$
\begin{array}{ll}
& -\gamma_k \cdot (f_b(\overline{v_X},\overline{v_D})+ G_b(\overline{v_X},\overline{v_D})\theta) 
+\displaystyle{\sum_{i=1}^{|\mathcal K_X|} \displaystyle{\lambda_i^{k*}} ({\gamma_i}' \cdot \overline{v_X} - \eta_i - \mu_i)} 
+\displaystyle{\sum_{j=1}^{|\mathcal K_D|} {\tilde{\lambda}^{k*}}_j (\alpha_{D,j}'\cdot \overline{v_D}  - \beta_{D,j})} 
\\
 = & -\gamma_k \cdot (f_b(\overline{v_X},\overline{v_D})+ G_b(\overline{v_X},\overline{v_D})\theta) 
+\displaystyle{\sum_{i=1}^{|\mathcal K_X|} \displaystyle{\lambda_i^{k*}} ({\gamma_i}' \cdot \overline{v_X} - \eta_i)} 
+\displaystyle{\sum_{j=1}^{|\mathcal K_D|} {\tilde{\lambda}^{k*}}_j (\alpha_{D,j}'\cdot \overline{v_D}  - \beta_{D,j})}-{\lambda^{k*}}\cdot \mu \\
\geq &t^* -{\lambda^{k*}}\cdot \mu \geq \displaystyle{\min_{k'\in\mathcal K_X} (t^*-{\lambda^{k'*}}\cdot \mu}).
\end{array}
$$
This shows that $\left(\displaystyle{\min_{k\in\mathcal K_X} (t^*-{\lambda^{k*}}\cdot \mu}),(\lambda^{k*})_{k\in\mathcal K_X},({\tilde{\lambda}^{k*}})_{k\in\mathcal K_D},\theta^*\right)$ 
is a feasible solution for linear program~(\ref{eq:inv}) where $\eta$ has been replaced 
by $\eta+\mu$.
It follows that
$
d^*_\mu \ge \displaystyle{\min_{k\in\mathcal K_X} (t^*-{\lambda^{k*}}\cdot \mu})
$
 which leads to the expected inequality
since $d^*=t^*$.
\end{proof}

The previous result has the following implications.
Let us assume that we are not able to synthesize a controller rendering polytope $P$ invariant by solving the linear program~(\ref{eq:inv}), this means that $d^* \le 0$. Then, the previous result tells us how to 
obtain a modified polytope $P_\mu$ in order to get $d^*_\mu \ge 0$ (or at least to get  an improved $d^*_\mu \ge d^*$).
This suggests that we can solve Problem~\ref{prob} using an iterative approach described in the following paragraph.

\subsection{Iterative approach}

Initially, let us assume that we have an initial guess for the polytope $P$; one can for instance use 
$\overline P$ but other choices are possible. We use an iterative approach to solve Problem~\ref{prob}; each iteration
consists of two main steps.

\subsubsection{First step: synthesize a controller}
Given polytope $P=\left\{x\in \R^n |\; \gamma_k\cdot x \le \eta_k ,\; \forall k\in \mathcal K_X \right\}$, we use Proposition~\ref{prop:inv} to synthesize a controller $h$.
Let $d^*$ and $\left(t^*, ({\lambda^{k*}})_{k\in\mathcal K_X},({\tilde{\lambda}^{k*}})_{k\in\mathcal K_D},\theta^*\right)$ denote the optimal value and an optimal solution  
of linear program~(\ref{eq:inv}). If $d^* \ge 0$, then we found a controller rendering $P$ invariant for the
controlled system (\ref{eq:odec}) and 
Problem~\ref{prob} is solved. If $d^* <0$, then we move to the second step.

\subsubsection{Second step: modify the polytope}
We now try to find $\mu \in \R^{|\mathcal K_X|}$ ensuring that polytope
$P_{\mu}=\left\{x\in \R^n |\; \gamma_k\cdot x \le \eta_k +\mu_k  ,\; \forall k\in \mathcal K_X \right\}$ will be invariant for the controlled system (\ref{eq:odec}). For that purpose, 
Proposition~\ref{th:sens} tells us that it is sufficient that
$d^* -\lambda^{k*} \cdot \mu \ge 0$, for all $k\in \mathcal K_X$.
The requirement that $\underline P \subseteq P_{\mu} \subseteq \overline P $ 
can be translated to $\underline{\eta_k}-\eta_k \le \mu_k \le \overline{\eta}_k - \eta_k$ 
for all  $k\in \mathcal K_X$.
Also, since sensitivity analysis is pertinent mainly for small perturbations, we impose that
for all $k\in \mathcal K_X$,
$-\varepsilon \le \mu_k \le \varepsilon$ where $\varepsilon$ is a parameter that can be tuned.
Then, finding a suitable $\mu$ can be done by solving the following linear program:
\begin{equation}
\label{eq:sens1}
\begin{array}{llr}
\text{maximize} & t\\
\text{over} & t\in \R,\;  \mu \in \R^{|\mathcal K_X|},\\
\text{subject to}  
&t \le d^*-{\lambda^{k*}}\cdot \mu, &  k\in \mathcal K_X,\\
&\min(-\varepsilon,\underline{\eta_k}-\eta_k) \leq \mu_k \leq \max(-\varepsilon,\underline{\eta_k}-\eta_k), & k\in \mathcal K_X.
\end{array}
\end{equation}
Let $(t^*,\mu^*)$ be a solution of this linear program. If the optimal value $t^*$ of this problem 
is non-negative then it is sufficient to prove that the controller $h:R_X\rightarrow U$ synthesized in the first step
and polytope $P_{\mu^*}$ solve Problem \ref{prob}.
Otherwise, if $t^*<0$, then we go back to the first step and start a new iteration with $P=P_{\mu^*}$.

\begin{remark} Let us remark that the polytope $P_{\mu^*}$ computed by solving (\ref{eq:sens1}) may have empty facets.
 In order to avoid such situations, it 
may be useful to replace $\mu^*$ by $\tilde{\mu}^*$ such that $P_{\tilde{\mu}^*}$ has no empty facet 
and $P_{\mu^*}= P_{\tilde{\mu}^*}$ (see Figure~\ref{fig:constraints}). This can be done by solving a set of linear
programs.
\begin{figure}[!h]
\begin{center}
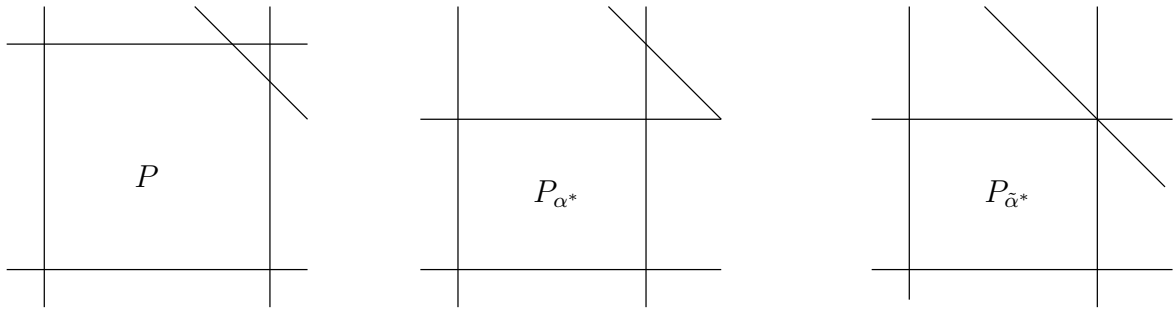
\caption{The polytope $P_{\mu^*}$ may have empty facets (center polytope), we replace $\mu^*$ by $\tilde{\mu}^*$ such that $P_{\tilde{\mu}^*}$ has no empty facet and $P_{\mu^*}= P_{\tilde{\mu}^*}$ (right polytope). \label{fig:constraints}}
\end{center}
\end{figure} 
\end{remark}

Let us discuss briefly the computational complexity of our approach. Each iteration mainly consists in solving two linear programs. The linear program~(\ref{eq:inv}) has $1+q+|\mathcal K_X| (
|\mathcal K_X| + |\mathcal K_D| )$ variables and 
$|\mathcal K_X|(|\mathcal K_X|+|\mathcal K_D|+|\overline{V_X'}|+|\overline{V_D'}|-1) + 
|\mathcal K_U| |\overline{V_X'}|$ inequality constraints. Let us remark that 
$|\overline{V_X'}|=(\delta_1+1)\times \dots \times (\delta_n+1)$ and 
$|\overline{V_D'}|=(\rho_1+1)\times \dots \times (\rho_n+1)$. Since the complexity of linear programming is polynomial
in average in the number of variables and constraints. It follows that the first step of the iteration has polynomial
cost in the number of constraints of polytopes $P$, $D$ and $U$ and in the degrees of the polynomials. The linear program~(\ref{eq:sens1})
has $1+|\mathcal K_X|$ variables and $2 |\mathcal K_X|$ inequality constraints.
It follows that the second step of the iteration has polynomial
cost in the number of constraints of polytope $P$.

\section{Examples} 

Our approach was implemented in Matlab; in the following, we show 
its application to a set of examples. 

\subsection{Moore-Greitzer jet engine model}

We tested our approach on the following polynomial system, corresponding to a Moore-Greitzer model of a jet engine~\cite{Krstic1995}:
\begin{equation}
\label{eq:moore}
\left\{
\begin{array}{rcll}
 \dot{x}_1 &=& -x_2-\frac{3}{2}x_1^2-\frac{1}{2}x_1^3 + d, \\
 \dot{x}_2 &=& u.
\end{array}\right.
\end{equation}
We first work in the rectangle $R_X=[-0.2,0.2]^2$ with disturbance $d \in D=R_D=[-0.02,0.02]$.
We want to synthesize a linear controller i.e $h(x_1,x_2)=\theta_1 x_1+\theta_1 x_2$ such that $h(x) \in U= [-0.35,0.35]$, for all $x \in R_X$.
Let $\underline{P}$ and $\overline{P}$ be polytopes with $m=24$ facets with uniformly distributed orientations
and tangent to the circles of center $(0,0)$ and of radius $0.01$ and $0.2$, respectively.
Using our approach, we found the controller $h(x_1,x_2)=0.8076 x_1-0.9424 x_2$ rendering
the polytope $P$ shown on the left part of Figure~\ref{fig:ex1} invariant.
We make a second experiment, working in the rectangle $R_X=[-0.2,0.2]^2$ with disturbance $d \in D=R_D=[-0.025,0.025]$.
We want to synthesize a polynomial control whose degrees are $3$ in $x_1$ and $1$ in $x_2$ (i.e. the same as the vector field).
$\underline{P}$ and $\overline{P}$ are polytopes with $m=8$ facets with uniformly distributed orientations
and tangent to the circles of center $(0,0)$ and of radius $0.01$ and $0.2$, respectively.
Using our approach, we found a controller rendering
the polytope $P$ shown on the right part of Figure~\ref{fig:ex1} invariant.
The previous experiments show that by looking for controller of higher degrees, we may be able to find simpler invariants for larger disturbances.

\begin{figure}[!h]
\begin{center}
\includegraphics[angle=0,scale=0.4]{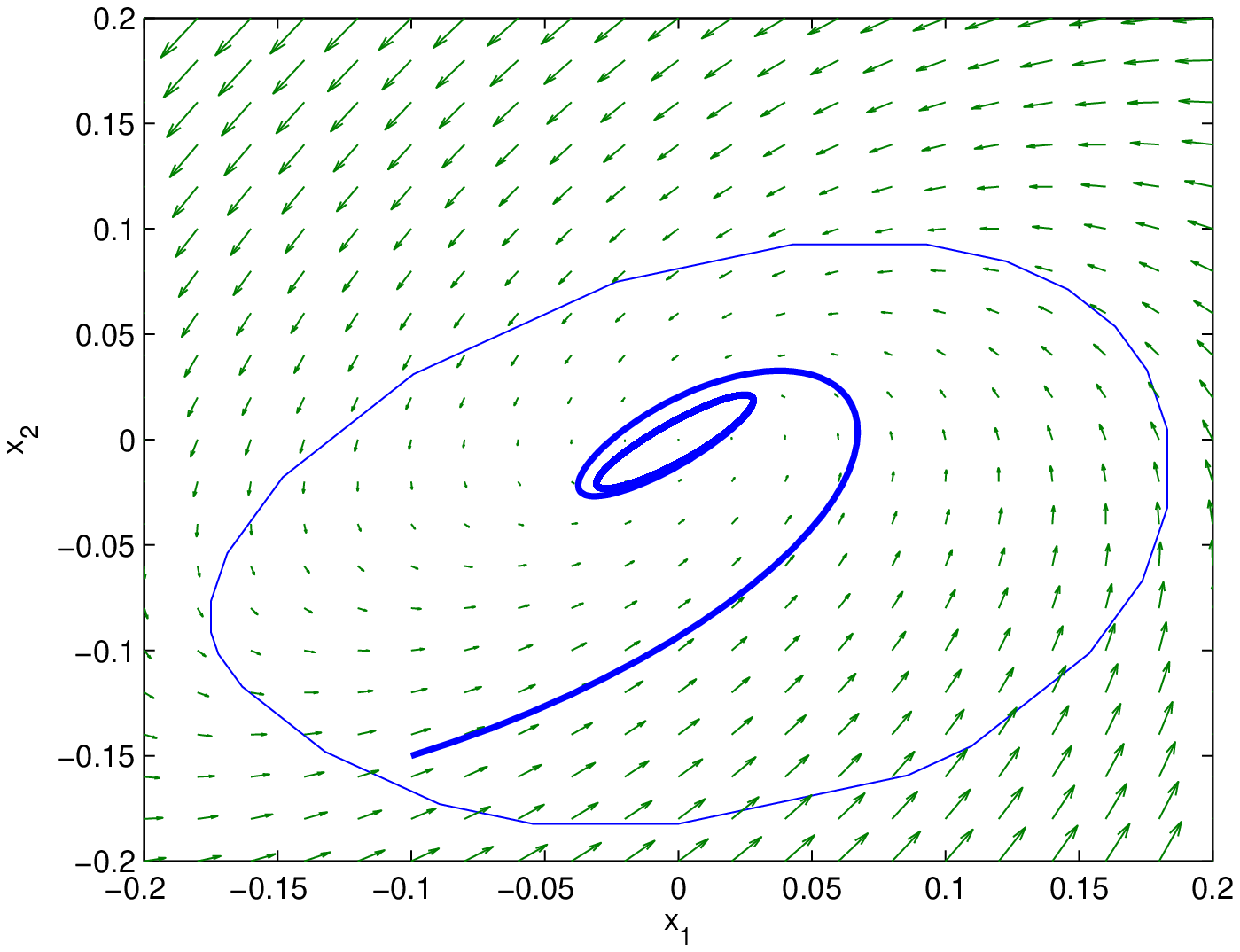} 
\includegraphics[angle=0,scale=0.4]{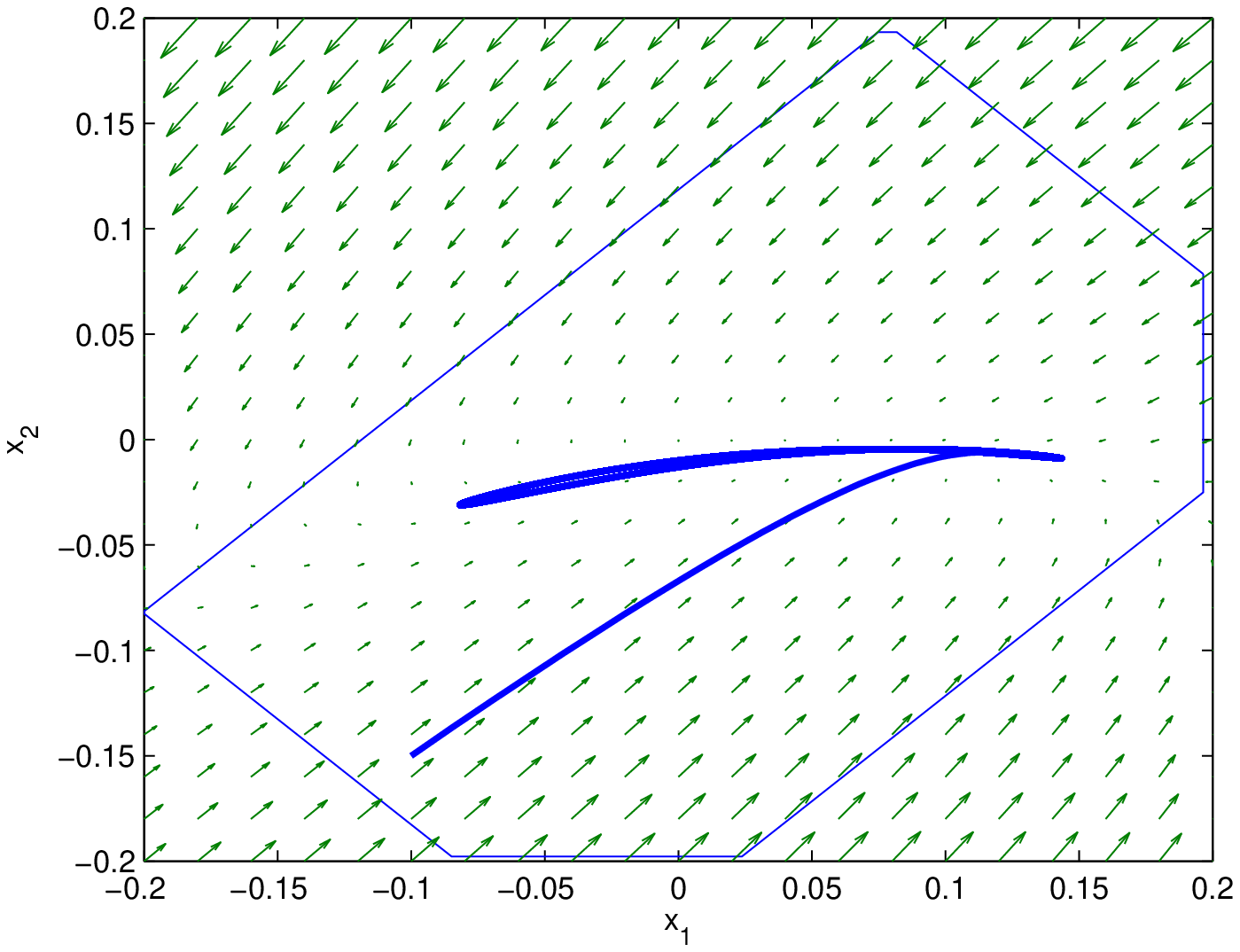} 
\caption{Left: Invariant polytope $P$ with $24$ facets and a trajectory of (\ref{eq:moore}) illustrating the invariance for disturbance $d(t)=0.02\cos(0.5t)$. Right: Invariant polytope $P$ with $8$ facets and a trajectory of (\ref{eq:moore}) illustrating the invariance for disturbance $d(t)=0.025\cos(0.1t)$.}
\label{fig:ex1}
\end{center}
\end{figure}

\subsection{Unicycle model}

We now consider a simple model of a unicycle:
$$
\left\{
\begin{array}{rcll}
 \dot{x} &=& v \cos(\varphi),\\
 \dot{y} &=& v \sin(\varphi),\\
 \dot{\varphi} &=& \omega.
\end{array}\right.
$$
where $v$ and $\omega$ are the inputs of the system representing respectively the velocity and the angular
velocity of the particle. In the following, we shall consider $v$ as a disturbance and $\omega$ as the control
input. Using the change of coordinates
$z_1 = x \cos(\varphi)+ y \sin(\varphi)$, and
$z_2 = x \sin(\varphi)- y \cos(\varphi)$, we obtain the following polynomial system.
\begin{equation}
\label{eq:unicycle}
\left\{
\begin{array}{rcll}
 \dot{z}_1 &=& v-z_2 \omega, \\
 \dot{z}_2 &=&  z_1\omega.
\end{array}\right.
\end{equation}
We work in the rectangle $R_X=[-0.1,0.1]\times [0.9,1.1]$ with disturbance $v\in D=R_D=[0.96,1.04]$. We want to
synthesize
an affine controller $h(z_1,z_2)=\theta_0+ \theta_1 z_1 + \theta_2 z_2$. In this example, we do not impose constraints on the value of the input. 
$\underline{P}$ and $\overline{P}$ are defined as polytopes with $m=24$ facets with uniformly distributed orientations
and tangent to the circles of center $(0,1)$ and radius $0.01$ and $0.1$ respectively.
Using our approach, we found the controller $h(z_1,z_2)=1.0178+1.8721 z_1-0.0253 z_2$ rendering
the polytope shown on Figure~\ref{fig:ex2} invariant.

\begin{figure}[!h]
\begin{center}
\includegraphics[angle=0,scale=0.6]{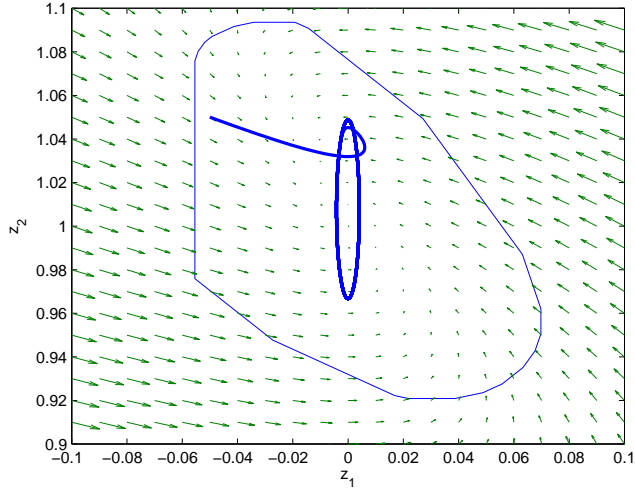} 
\caption{Invariant polytope $P$ with $24$ facets and a trajectory
of (\ref{eq:unicycle}) illustrating the invariance for disturbance $v(t)=1+0.04\cos(0.1t)$.}
\label{fig:ex2}
\end{center}
\end{figure}

\subsection{Rigid body motion}

The last example is a model describing the motion of a rigid body. 
It is borrowed from~\cite{Byrnes89}:
\begin{equation}
\label{eq:rigid}
\left\{
\begin{array}{rcll}
 \dot{x}_1 &=& u_1,\\
 \dot{x}_2 &=& u_2, \\
 \dot{x}_3 &=& x_1x_2.
\end{array}\right.
\end{equation}
We work in the rectangle $R_X=[-0.2,0.4]\times[-0.2,0.2]\times[-0.2,0.4]$. In this example, we do not consider disturbances. 
We want to synthesize a multi-affine controller i.e $h: \R^3 \rightarrow \R^2$ (defined by sixteen parameters),
such that $h(x) \in U= [-1,1]^2$, for all $x \in R_X$. 
Using our approach, we found a controller rendering
the polytope with $18$ facets, shown on Figure~\ref{fig:ex3}, invariant.
\begin{figure}[!h]
\begin{center}
\includegraphics[angle=0,scale=0.6]{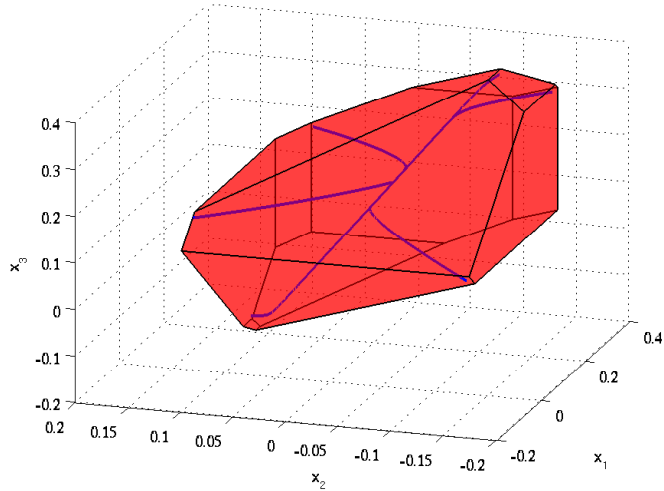} 
\caption{Invariant polytope $P$ with $18$ facets and trajectories
of (\ref{eq:rigid}) illustrating the invariance.}
\label{fig:ex3}
\end{center}
\end{figure}

\section{Conclusion}

In this paper, we have considered the problem of synthesizing controllers ensuring robust invariance of polynomial dynamical systems.
Using the recent results of~\cite{Bensassi2010} on polynomial optimization over bounded polytopes, we have developed an iterative approach to solve this problem. It is mainly based on linear programming and therefore it is effective.
We have shown applications to several examples which shows the usefulness of the approach. 
Future work will focus on a deeper theoretical analysis of the properties of the linear programming relaxations
of polynomial optimization problems as well as their application to other classes of problems in control.
 
\bibliographystyle{alpha}
\bibliography{ref}

\begin{thebibliography}{LHPT08}

\bibitem[Aub91]{aubin1991}
J.P. Aubin.
\newblock {\em Viability Theory}.
\newblock Birkhauser, 1991.

\bibitem[BG10]{Bensassi2010}
M.A. {Ben Sassi} and A.~Girard.
\newblock Polytopic invariant verification and synthesis for polynomial
  dynamical systems via linear programming.
\newblock 2010.
\newblock Submitted, arXiv:1012.1256v1.

\bibitem[BH06]{Belta06}
C.~Belta and L.C.G.J.M. Habets.
\newblock Controlling a class of nonlinear systems on rectangles.
\newblock {\em IEEE Transactions on Automatic Control}, 51(11):1749--1759,
  2006.

\bibitem[BI89]{Byrnes89}
C.I. Byrnes and A.~Isidori.
\newblock New results and examples in nonlinear feedback stabilization.
\newblock {\em Systems \& Control Letters}, 12(4):437--442, 1989.

\bibitem[Bla99]{blanchini99}
F.~Blanchini.
\newblock Set invariance in control.
\newblock {\em Automatica}, 35:1747--1777, 1999.

\bibitem[KKK95]{Krstic1995}
M.~Krsti\'c, I.~Kanellakopoulos, and P.~Kokotovi\'c.
\newblock {\em Nonlinear and Adaptive Control Design}.
\newblock Wiley, 1995.

\bibitem[Las01]{Lasserre2001}
J.B. Lasserre.
\newblock Global optimization with polynomials and the problem of moments.
\newblock {\em SIAM J. Optimization}, 11(3):796--817, 2001.

\bibitem[LHPT08]{Lasserre2008}
J.B. Lasserre, D.~Henrion, C.~Prieur, and E.~Trélat.
\newblock Nonlinear optimal control via occupation measures and lmi
  relaxations.
\newblock {\em SIAM J. Control Opt.}, 47(4):1643--1666, 2008.

\bibitem[Par03]{Parrilo2003}
P.A. Parrilo.
\newblock Semidefinite programming relaxations for semialgebraic problems.
\newblock {\em Mathematical Programming Ser. B}, 96(2):293--320, 2003.

\bibitem[PPR04]{Prajna2004}
S.~Prajna, P.A. Parrilo, and A.~Rantzer.
\newblock Nonlinear control synthesis by convex optimization.
\newblock {\em IEEE Trans. on Autom. Control}, 49(2):1--5, 2004.

\bibitem[Ram89]{Ramshaw89}
L.~Ramshaw.
\newblock Blossoms are polar forms.
\newblock {\em Computer Aided Geometric Design}, 6:323--358, 1989.

\end{thebibliography}
\end{document}